\definecolor{VeryLightBlue}{rgb}{0.9,0.9,1}
\definecolor{LightBlue}{rgb}{0.8,0.8,1}
\definecolor{MidBlue}{rgb}{0.5,0.5,1}
\definecolor{DarkBlue}{rgb}{0,0,0.6}
\definecolor{Blue}{rgb}{0,0,1}
\definecolor{Gold}{rgb}{1,0.843,0}
\definecolor{LightGreen}{rgb}{0.88,1,0.88}
\definecolor{MidGreen}{rgb}{0.6,1,0.6}
\definecolor{DarkGreen}{rgb}{0,0.6,0}
\definecolor{VeryLightYellow}{rgb}{1,1,0.9}
\definecolor{LightYellow}{rgb}{1,1,0.6}
\definecolor{MidYellow}{rgb}{1,1,0.5}
\definecolor{DarkYellow}{rgb}{1,1,0.2}
\definecolor{DarkPurple}{rgb}{.6,0,1}
\definecolor{Red}{rgb}{1,0,0}
\definecolor{VeryLightRed}{rgb}{1,0.9,0.9}
\definecolor{LightRed}{rgb}{1,0.8,0.8}
\definecolor{MidRed}{rgb}{1,0.55,0.55}
\long\def\delete#1{}
\newtheorem{theorem}{Theorem}[section]
\newtheorem{lemma}[theorem]{Lemma}
\newcommand{\be}{\begin{equation}}
\newcommand{\ee}{\end{equation}}
\newcommand{\bea}{\begin{eqnarray}}
\newcommand{\eea}{\end{eqnarray}}
\newcommand{\bean}{\begin{eqnarray*}}
\newcommand{\eean}{\end{eqnarray*}}
\def\qed{\hfill$\Box$\vspace{11pt}}
\def\ZZZ{\mathbb{Z}}
\def\b0{{\bf 0}}
\def\Ga{\Gamma}
\def\b{\beta}
\def\l{\lambda}
\def\z{\zeta}
\def\Cay{{\rm Cay}}
\def\SL{{\rm SL}}
\def\Cay{{\rm Cay}}
\title{Perfect codes in circulant graphs}
\author[a]{Rongquan Feng}
\author[b]{He Huang}
\author[c]{Sanming Zhou}
\affil[a]{LMAM, School of Mathematical Sciences, Peking University, Beijing 100871, China\\ 

Email: fengrq@math.pku.edu.cn}
\affil[b]{School of Mathematical Sciences, Peking University, Beijing 100871, China\\

Email: 1301110019@math.pku.edu.cn
}
\affil[c]{School of Mathematics and Statistics, The University of Melbourne, Parkville, VIC 3010, Australia\\

Email: sanming@unimelb.edu.au}
\date{}
\begin{document}
\openup 0.5\jot
\maketitle


\begin{abstract}
A perfect code in a graph $\Ga = (V, E)$ is a subset $C$ of $V$ that is an independent set such that every vertex in $V \setminus C$ is adjacent to exactly one vertex in $C$. A total perfect code in $\Ga$ is a subset $C$ of $V$ such that every vertex of $V$ is adjacent to exactly one vertex in $C$. A perfect code in the Hamming graph $H(n, q)$ agrees with a $q$-ary perfect 1-code of length $n$ in the classical setting. In this paper we give a necessary and sufficient condition for a circulant graph of degree $p-1$ to admit a perfect code, where $p$ is an odd prime. We also obtain a necessary and sufficient condition for a circulant graph of order $n$ and degree $p^l-1$ to have a perfect code, where $p$ is a prime and $p^l$ the largest power of $p$ dividing $n$. Similar results for total perfect codes are also obtained in the paper. 

{\em Key words}: perfect code; total perfect code; efficient dominating set; efficient open dominating set; Cayley graph; circulant graph

{\em AMS Subject Classification (2010)}: 05C25, 05C69, 94B99 
\end{abstract}

\section{Introduction}
\label{sec:int}

Since the beginning of coding theory in the late 1940s, perfect codes have been important objects of study in information theory; see the surveys \cite{Heden, vanLint} for a large number of results on perfect codes. Hamming and Golay codes are well known examples of perfect codes, and their importance is widely recognized. The notion of perfect codes can be generalized to graphs \cite{Biggs73, Kra} in a natural way, such that $q$-ary perfect $e$-codes of length $n$ in the classical setting are precisely perfect $e$-codes in the corresponding Hamming graph $H(n, q)$. Since Hamming graphs are a particular family of Cayley graphs, perfect codes in Cayley graphs can be viewed as generalizations of perfect codes in the classical case. From a group theoretic point of view, the simplest Cayley graphs are circulant graphs, namely Cayley graphs on cyclic groups. However, even in this innocent-looking case, the question about when a general circulant graph admits a perfect 1-code is unsettled. Contributing to improvement of this unsatisfactory situation, we answer this question for two families of circulant graphs and give similar results for total perfect codes in this paper. 

Let $\Ga = (V, E)$ be a simple undirected graph and $e \ge 1$ an integer. The \emph{ball} with radius $e$ and centre $u \in V$ is the set of vertices of $\Ga$ with distance at most $e$ to $u$ in $\Ga$. A subset $C$ of $V$ is called a \emph{perfect $e$-code} \cite{Biggs73, Kra} in $\Ga$ if the balls with radius $e$ and centres in $C$ form a partition of $V$. As mentioned above, $q$-ary perfect $e$-codes of length $n$ in the classical setting \cite{Heden, vanLint} are simply perfect $e$-codes in the Hamming graph $H(n, q)$. In graph theory, perfect $1$-codes in a graph are called efficient dominating sets or independent perfect dominating sets of the graph. In the rest of this paper a perfect 1-code is simply called a \emph{perfect code}. A subset $C \subseteq V$ is called a \emph{total perfect code} in $\Ga$ (see e.g. \cite{GHT}) if every vertex of $\Ga$ has exactly one neighbour in $C$. This concept is related to diameter perfect codes, which were introduced in \cite{AAK01} for distance regular graphs and adapted in \cite{Etzion11} for Lee metric over $\mathbb{Z}^n$ and $\mathbb{Z}_q^n$. As mentioned in \cite{Z16}, when the Manhattan (for $\mathbb{Z}^n$) or Lee (for $\mathbb{Z}_q^n$) distance is considered, total perfect codes coincide with diameter perfect codes of minimum distance four. 
 
Perfect codes in Cayley graphs are particularly charming objects. Given a finite group $G$ and an inverse-closed subset $X$ of $G$ not containing the identity element, the \emph{Cayley graph} $\Cay(G, X)$ on $G$ relative to the \emph{connection set} $X$ is the graph  with vertex set $G$ such that $u, v \in G$ are adjacent if and only if $vu^{-1} \in X$. This graph is connected if and only if $S$ is a generating set of $G$. In the special case when $G=\ZZZ_n$ is the additive group of integers modulo $n$, a Cayley graph $\Cay(\ZZZ_n, S)$ on $\ZZZ_n$ is called a \emph{circulant graph}. In \cite{MBG07} sufficient conditions for Gaussian and Eisenstein-Jacobi graphs to contain perfect $e$-codes were given, and these conditions were proved to be necessary in \cite{Z15} in a more general setting. In \cite{T04} it was proved that there is no perfect code in any Cayley graph on $\SL(2, 2^f)$, $f > 1$ with respect to a conjugation-closed connection set. In \cite{DS03} a methodology for constructing infinite families of E-chains of Cayley graphs on symmetric groups was given, where an E-chain is a countable family of nested graphs each containing a perfect code. In \cite{E87} perfect codes in a Cayley graph with a conjugation-closed connection set were studied by way of equitable partitions, yielding a nonexistence result in terms of irreducible characters of the underlying group. In \cite{L01} it was proved that a conjugation-closed subset $C$ of a group $G$ is a perfect code in a Cayley graph on $G$ if and only if there exists a covering projection from the Cayley graph to a complete graph with $C$ as a fibre. A similar result was obtained in \cite{Z16} for total perfect codes in Cayley graphs. In a recent work \cite{HXZ}, perfect codes in Cayley graphs were studied from the viewpoint of group rings, and among other results conditions for a normal subgroup of a finite group to be a perfect code in some Cayley graph of the group were obtained.
 
Perfect codes in circulant graphs have been studied by several researchers in recent years. In \cite{OPR07}, 3- and 4-regular connected circulant graphs admitting a perfect code were characterized, and a sufficient condition for a general circulant graph to have a perfect code was given. In \cite{YPD14} a necessary and sufficient condition for a circulant graph to admit a perfect code with size a prime number was given and all such perfect codes were characterized. In \cite{KM13} a few results on perfect codes in circulant graphs were proved. In \cite{Z15} perfect $e$-codes in an interesting family of circulant graphs with degree twice an odd prime were studied in the more general setting of cyclotomic graphs. 

In spite of the efforts above, our understanding of perfect codes in circulant graphs is still quite limited. In this paper we prove the following results with the help of cyclotomic polynomials. 
 
\begin{theorem}  
\label{main}    
Let $n$ be a positive integer and $p$ be an odd prime. A connected circulant graph $\Cay(\ZZZ_n, S)$ of degree $p-1$ admits a perfect code if and only if $p$ divides $n$ and $s \not \equiv s'\mod p$ for distinct $s, s' \in S \cup \{0\}$.           
\end{theorem}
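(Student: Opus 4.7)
The plan is to recast the perfect-code condition as a polynomial identity in $\ZZZ[x]/(x^n-1)$ and then exploit cyclotomic divisibility. Writing $C(x) = \sum_{c \in C} x^c$ and $T(x) = 1 + \sum_{s \in S} x^s$, the subset $C$ is a perfect code if and only if $C(x)\,T(x) \equiv 1 + x + \cdots + x^{n-1} \pmod{x^n - 1}$. For sufficiency I would take $C = p\ZZZ_n$: the distinctness of the residues of $S \cup \{0\}$ modulo $p$ guarantees no $s \in S$ is divisible by $p$, giving independence, and every $v \in \ZZZ_n$ has a unique decomposition $v = c + s$ with $c \in C$, obtained by picking the unique $s \in S \cup \{0\}$ with $s \equiv v \pmod p$ and setting $c = v - s$.

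For necessity, evaluating the identity at $x = 1$ yields $|C| \cdot p = n$, so $p \mid n$. Evaluating at a primitive $d$-th root of unity $\omega$ for any $d \mid n$, $d > 1$, gives $C(\omega)\,T(\omega) = 0$; since $C, T \in \ZZZ[x]$, Galois conjugation produces the dichotomy $\Phi_d(x) \mid C(x)$ or $\Phi_d(x) \mid T(x)$ in $\ZZZ[x]$ for every such $d$.

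The crux is the structural lemma: if $p^j \mid n$ and $\Phi_{p^j}(x) \mid T(x)$, then $j = 1$ under connectedness. Using $\Phi_{p^j}(x) = \Phi_p(x^{p^{j-1}})$, I would write $T = \Phi_p(x^{p^{j-1}})\,g$ and decompose $g(x) = \sum_{a=0}^{p^{j-1}-1} x^a g_a(x^{p^{j-1}})$ by the residue of the exponent modulo $p^{j-1}$. Then for each $a$, the part of $T$ whose exponents are $\equiv a \pmod{p^{j-1}}$ equals $x^a\,\Phi_p(x^{p^{j-1}})\,g_a(x^{p^{j-1}})$, which is a 0-1 polynomial containing $p\,g_a(1)$ monomials. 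Since these counts are non-negative integers summing to $T(1) = p$, exactly one index $a^*$ satisfies $g_{a^*}(1) = 1$ and $g_a(1) = 0$ for $a \neq a^*$; the normalization $T(0) = 1$ then pins $a^* = 0$. Thus $S \cup \{0\} \subseteq p^{j-1}\ZZZ_n$, so $\gcd(n, S) \geq p^{j-1}$, and connectedness forces $p^{j-1} = 1$.

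Now suppose for contradiction that $\Phi_p \nmid T$, so by the dichotomy $\Phi_p \mid C$. Letting $p^k$ be the largest power of $p$ dividing $n$, the lemma rules out $\Phi_{p^j} \mid T$ for $2 \leq j \leq k$, hence $\Phi_{p^j} \mid C$ for every $1 \leq j \leq k$. Consequently $\prod_{j=1}^{k} \Phi_{p^j}(x) = 1 + x + \cdots + x^{p^k-1}$ divides $C(x)$, and evaluating at $x = 1$ gives $p^k \mid |C| = n/p$, i.e.\ $p^{k+1} \mid n$, contradicting the maximality of $k$. Therefore $\Phi_p \mid T$, and $T(\omega_p) = \sum_{i=0}^{p-1} a_i \omega_p^i = 0$ (with $a_i = |\{s \in S \cup \{0\} : s \equiv i \pmod p\}|$), combined with the fact that $\Phi_p$ is the minimal polynomial of $\omega_p$, forces $a_0 = a_1 = \cdots = a_{p-1} = 1$—exactly the required distinctness. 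The main obstacle is the structural lemma, whose proof needs a careful interplay between the 0-1 constraint on $T$, the normalization $T(0) = 1$, and the factorization $\Phi_{p^j}(x) = \Phi_p(x^{p^{j-1}})$ to localize the support of $T$ modulo $p^{j-1}$.
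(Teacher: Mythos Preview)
Your proof is correct and follows essentially the same architecture as the paper's: translate the perfect-code condition into the polynomial identity $C(x)T(x)\equiv 1+x+\cdots+x^{n-1}\pmod{x^n-1}$, extract $p\mid n$ from $x=1$, use irreducibility of $\Phi_{p^j}$ to get the dichotomy $\Phi_{p^j}\mid C$ or $\Phi_{p^j}\mid T$ for each $1\le j\le k$, rule out $\Phi_{p^j}\mid T$ for $j\ge 2$ via connectedness, and then force $\Phi_p\mid T$ by the valuation argument on $|C|=n/p$. The only technical variation is in the structural lemma (the paper's Claim~2): the paper reduces $T$ modulo $x^p-1$ and uses $\Phi_{p^j}(x)\equiv p\pmod{x^p-1}$ for $j\ge 2$ to conclude that all of $S\cup\{0\}$ lies in a single residue class mod $p$, whereas you decompose $g$ by residues modulo $p^{j-1}$ and count monomials in each class; your route gives the slightly sharper containment $S\cup\{0\}\subseteq p^{j-1}\ZZZ_n$, but both reach the identical contradiction with connectedness.
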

 
\begin{theorem}   
\label{th:pl}    
Let $n, l$ be positive integers, and let $p$ be a prime such that $p^l$ divides $n$ but $p^{l+1}$ does not divide $n$. A connected circulant graph $\Cay(\ZZZ_n, S)$ of degree $p^l-1$ admits a perfect code if and only if $s \not \equiv s'\mod p^l$ for distinct $s, s' \in S \cup \{0\}$.          
\end {theorem}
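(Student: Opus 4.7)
Write $n = p^l m$ with $\gcd(m,p) = 1$, put $T := S \cup \{0\}$ (so $|T| = p^l$), and for $A \subseteq \ZZZ_n$ define $\tilde A(x) := \sum_{a \in A} x^a \in \ZZZ[x]$. The starting point is the standard observation that a subset $C \subseteq \ZZZ_n$ is a perfect code in $\Cay(\ZZZ_n, S)$ precisely when the translates $\{c + T : c \in C\}$ partition $\ZZZ_n$, equivalently
\begin{equation}
\tilde C(x)\,\tilde T(x) \;\equiv\; 1 + x + \cdots + x^{n-1} \pmod{x^n - 1}. \label{eq:tile-pl}
\end{equation}

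For sufficiency I would take $C := p^l \ZZZ_n$, the subgroup of $\ZZZ_n$ of order $m$, and simply check that it is a perfect code: differences of elements of $C$ lie in $p^l\ZZZ_n$, so the hypothesis that $T$ is a transversal of $\ZZZ_n / p^l\ZZZ_n$ makes the translates $c + T$ pairwise disjoint, and the cardinality count $|C|\cdot|T| = n$ forces them to partition $\ZZZ_n$.

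For necessity, assume a perfect code $C$ exists, so \eqref{eq:tile-pl} holds and $|C| = m$. For each $k \in \{1,\ldots,l\}$ and every primitive $p^k$-th root of unity $\zeta$, evaluating \eqref{eq:tile-pl} at $\zeta$ (using $p^k \mid n$ and $\zeta \neq 1$) gives $\tilde C(\zeta)\,\tilde T(\zeta) = 0$. The crux of the argument --- and the step I expect to be the main obstacle --- is ruling out $\tilde C(\zeta) = 0$: if this held, the irreducibility of the cyclotomic polynomial $\Phi_{p^k}(x)$ (together with Gauss's lemma) would give $\tilde C(x) = \Phi_{p^k}(x)\,Q(x)$ for some $Q \in \ZZZ[x]$, and evaluating at $x = 1$ would yield $m = |C| = \Phi_{p^k}(1)\,Q(1) = p \cdot Q(1)$, contradicting $\gcd(p,m) = 1$. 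This is precisely where the exponent hypothesis $p^{l+1} \nmid n$ enters.

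Thus $\tilde T(\zeta) = 0$ at every primitive $p^k$-th root of unity with $1 \le k \le l$, which forces $\Psi(x) := \prod_{k=1}^{l} \Phi_{p^k}(x) = 1 + x + \cdots + x^{p^l - 1}$ to divide $\tilde T(x)$ in $\ZZZ[x]$. Writing $\tilde T = \Psi \cdot R$ with $R \in \ZZZ[x]$ and evaluating at $x = 1$ gives $R(1) = 1$. Using the identity $x^i \Psi(x) \equiv \Psi(x) \pmod{x^{p^l} - 1}$ for all $i \ge 0$, I would then reduce modulo $x^{p^l} - 1$ to obtain $\tilde T \equiv R(1)\Psi = \Psi \pmod{x^{p^l} - 1}$. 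Reading off coefficients, this says each residue class mod $p^l$ contains exactly one element of $T$, as claimed.
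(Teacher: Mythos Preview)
Your proposal is correct and follows essentially the same route as the paper: both arguments use the polynomial tiling identity \eqref{eq:tile-pl}, observe that each cyclotomic factor $\Phi_{p^k}$ (for $1\le k\le l$) must divide either $\tilde C$ or $\tilde T$, rule out $\Phi_{p^k}\mid\tilde C$ via $\Phi_{p^k}(1)=p$ together with $p\nmid |C|=m$, and then deduce that $\Psi(x)=1+x+\cdots+x^{p^l-1}$ divides $\tilde T$ before reducing modulo $x^{p^l}-1$ to read off the residues. Your final reduction via $x^i\Psi\equiv\Psi\pmod{x^{p^l}-1}$ and the paper's degree-comparison argument are just two phrasings of the same step, and your sufficiency code $C=p^l\ZZZ_n$ coincides with the one implicit in the paper's Lemma~\ref{lem:suf}.
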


\begin{theorem}
\label{th:main-total}       
Let $n$ be a positive integer and $p$ be an odd prime. A connected circulant graph $\Cay(\ZZZ_n, S)$ of degree $p$ admits a total perfect code if and only if $p$ divides $n$ and $s \not \equiv s'\mod p$ for distinct $s, s' \in S$. 
\end {theorem}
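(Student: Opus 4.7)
The plan is to encode the total perfect code condition as a polynomial identity modulo $x^n - 1$ and to probe it via cyclotomic factors, in the spirit of Theorems~\ref{main} and~\ref{th:pl}.

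For sufficiency, when $p \mid n$ and $S$ contains exactly one element from each residue class modulo $p$, I take $C = p\ZZZ_n$, the unique subgroup of $\ZZZ_n$ of index $p$. For each $v \in \ZZZ_n$ there is a unique $s \in S$ with $s \equiv v \pmod{p}$; then $c := v - s$ lies in $C$, and this is the only way to write $v = c + s$ with $c \in C$ and $s \in S$, so $C$ is a total perfect code.

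For necessity, suppose $C$ is a total perfect code and put $c(x) = \sum_{a \in C} x^a$, $s(x) = \sum_{a \in S} x^a$. The condition is equivalent to $c(x) s(x) \equiv 1 + x + \cdots + x^{n-1} \pmod{x^n - 1}$; evaluating at $x = 1$ yields $|C| \cdot p = n$, so $p \mid n$, and I set $m = n/p$. For $i \in \ZZZ_p$ define $a_i = |\{s \in S : s \equiv i \pmod{p}\}|$, $b_i = |\{c \in C : c \equiv i \pmod{p}\}|$, and $A(y) = \sum_i a_i y^i$, $B(y) = \sum_i b_i y^i$. Reducing the identity modulo $y^p - 1$ yields
\[
A(y) B(y) \equiv m (1 + y + \cdots + y^{p-1}) \pmod{y^p - 1},
\]
so $A(\z_p) B(\z_p) = 0$ for $\z_p$ a primitive $p$th root of unity. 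If $A(\z_p) = 0$, then $\Phi_p(y) \mid A(y)$ in $\ZZZ[y]$; combined with $\deg A < p$, $A(1) = p$, and nonnegativity of coefficients, this forces $A = \Phi_p$, i.e., each $a_i = 1$, which is the desired condition on $S$.

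The main obstacle is to rule out the alternative $A(\z_p) \ne 0$. In that case $B(\z_p) = 0$ forces $B = (m/p)\Phi_p$ and $p \mid m$, and I plan to bootstrap. For $k \ge 1$, let $A^{(k)}(y), B^{(k)}(y) \in \ZZZ[y]$ denote the reductions of $s(x)$ and $c(x)$ modulo $y^{p^k} - 1$; the inductive hypothesis is that $p^k \mid m$ and $B^{(k)}(y) = (m/p^k)(1 + y + \cdots + y^{p^k - 1})$. Reducing the basic identity modulo $y^{p^{k+1}} - 1$ and evaluating at $\z_{p^{k+1}}$ gives $A^{(k+1)}(\z_{p^{k+1}}) B^{(k+1)}(\z_{p^{k+1}}) = 0$. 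If $A^{(k+1)}(\z_{p^{k+1}}) = 0$, the shape $\Phi_{p^{k+1}}(y) = 1 + y^{p^k} + \cdots + y^{(p-1)p^k}$ together with nonnegativity of coefficients (arranged in $p$ disjoint blocks of length $p^k$) forces $A^{(k+1)}(y) = y^j \Phi_{p^{k+1}}(y)$ for some $0 \le j < p^k$; inverse-closedness of $S$ gives $p^k \mid 2j$, and since $p$ is odd, $j = 0$, so every $s \in S$ is divisible by $p$, contradicting connectedness of $\Cay(\ZZZ_n, S)$ (as $p \mid n$). Hence $B^{(k+1)}(\z_{p^{k+1}}) = 0$, and comparison with $B^{(k)}$ via the congruence $\Phi_{p^{k+1}}(y) \equiv p \pmod{y^{p^k} - 1}$ yields $p^{k+1} \mid m$ and $B^{(k+1)}(y) = (m/p^{k+1})(1 + y + \cdots + y^{p^{k+1} - 1})$. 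Iterating up to the largest $a$ with $p^a \mid n$ gives $p^a \mid m = n/p$, contradicting $p^{a+1} \nmid n$. Thus $A(\z_p) = 0$, completing the proof.
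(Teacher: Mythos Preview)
Your proof is correct and follows essentially the same approach as the paper, which adapts the proof of Theorem~\ref{main} with $S\cup\{0\}$ replaced by $S$: the polynomial identity of Lemma~\ref{lem:total-def}, evaluation at primitive $p^j$-th roots of unity, and the connectedness obstruction when every element of $S$ turns out to be divisible by $p$. The only difference is organizational: the paper argues directly that some $\lambda_{p^j}$ must divide $f_S$ (the analogue of Claim~1, via $p^l\nmid |C|$) and that $j\ge 2$ is impossible (the analogue of Claim~2, via reduction modulo $x^p-1$ together with inverse-closedness of $S$), whereas you assume $\Phi_p\nmid f_S$ and run an inductive bootstrap whose Case~1 is Claim~2 in disguise and whose Case~2 accumulates powers of $p$ dividing $|C|$ until the divisibility contradiction of Claim~1 is reached. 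The paper's route is slightly shorter since Claim~2 is proved once for all $j\ge 2$ rather than re-derived inside an induction, but both arguments rest on the same ingredients.
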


\begin{theorem}
\label{th:pl-total}       
Let $n, l$ be positive integers, and let $p$ be a prime such that $p^l$ divides $n$ but $p^{l+1}$ does not divide $n$. A connected circulant graph $\Cay(\ZZZ_n, S)$ of degree $p^l$ admits a total perfect code if and only if $s \not \equiv s'\mod p^l$ for distinct $s, s' \in S$.
\end {theorem}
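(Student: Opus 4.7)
My plan is to encode the total perfect code condition as a convolution identity in the group ring $\ZZZ[\ZZZ_n]\cong\ZZZ[x]/(x^n-1)$, and then, after projecting to $\ZZZ[y]/(y^{p^l}-1)$, extract the residues of $S$ modulo $p^l$ via the cyclotomic polynomials $\Phi_{p^k}(y)$ for $1\le k\le l$. Setting $\chi_S(x)=\sum_{s\in S}x^s$ and $\chi_C(x)=\sum_{c\in C}x^c$ for any candidate $C\subseteq\ZZZ_n$, and using $S=-S$, the property that every $v\in\ZZZ_n$ has exactly one neighbour in $C$ translates to
\[
\chi_S(x)\,\chi_C(x)\;\equiv\;1+x+x^2+\cdots+x^{n-1}\pmod{x^n-1}.
\]
For the ``if'' direction I will take $C$ to be the unique subgroup of $\ZZZ_n$ of index $p^l$, namely $C=\{0,p^l,2p^l,\ldots,(m-1)p^l\}$ with $m=n/p^l$. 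The hypothesis that the $p^l$ elements of $S$ are pairwise distinct modulo $p^l$ then makes $S$ a transversal for the cosets of $C$, so for each $v\in\ZZZ_n$ there is exactly one $s\in S$ with $v+s\equiv 0\pmod{p^l}$, i.e.\ with $v+s\in C$; hence $C$ is a total perfect code.

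For the ``only if'' direction, suppose a total perfect code $C$ exists, and apply the ring homomorphism $\ZZZ[x]/(x^n-1)\to\ZZZ[y]/(y^{p^l}-1)$ induced by the quotient $v\mapsto v\bmod p^l$. Letting $a_r=|\{s\in S:s\equiv r\bmod p^l\}|$, $b_r=|\{c\in C:c\equiv r\bmod p^l\}|$, $\bar\chi_S(y)=\sum_{r=0}^{p^l-1}a_ry^r$ and $\bar\chi_C(y)=\sum_{r=0}^{p^l-1}b_ry^r$, which are polynomials in $\ZZZ[y]$ of degree at most $p^l-1$, the displayed identity descends to
\[
\bar\chi_S(y)\,\bar\chi_C(y)\;\equiv\;m\bigl(1+y+\cdots+y^{p^l-1}\bigr)\pmod{y^{p^l}-1}.
\]
Since $\Phi_{p^k}(y)$ divides both $y^{p^l}-1$ and $1+y+\cdots+y^{p^l-1}$ for each $1\le k\le l$, it divides $\bar\chi_S\,\bar\chi_C$ in $\ZZZ[y]$, and the irreducibility of $\Phi_{p^k}$ over $\ZZZ$ forces $\Phi_{p^k}\mid\bar\chi_S$ or $\Phi_{p^k}\mid\bar\chi_C$.

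The crucial step, where the hypothesis $p^{l+1}\nmid n$ is decisive, is to rule out the second alternative for every $k$: if $\Phi_{p^k}\mid\bar\chi_C$ in $\ZZZ[y]$, then evaluating at $y=1$ yields $p=\Phi_{p^k}(1)\mid\bar\chi_C(1)=|C|=m$, contradicting $p\nmid m$. Hence $\Phi_{p^k}\mid\bar\chi_S$ for every $k\in\{1,\ldots,l\}$, so their product $\prod_{k=1}^{l}\Phi_{p^k}(y)=1+y+\cdots+y^{p^l-1}$ divides $\bar\chi_S$; since $\deg\bar\chi_S\le p^l-1$ equals the degree of this divisor and $\bar\chi_S(1)=|S|=p^l$, we conclude $\bar\chi_S(y)=1+y+\cdots+y^{p^l-1}$, meaning every residue modulo $p^l$ is represented exactly once in $S$. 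The main obstacle I anticipate is arranging this cyclotomic dichotomy so that it resolves uniformly in favour of $\bar\chi_S$; once that is set up, the arithmetic hypothesis $p\nmid m$ closes each case at $y=1$.
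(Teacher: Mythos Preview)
Your proposal is correct and follows essentially the same route as the paper. The paper proves Theorem~\ref{th:pl-total} by invoking Lemma~\ref{lem:total-def} and Lemma~\ref{lem:suf-total} and then repeating the argument of Theorem~\ref{th:pl} with $S$ in place of $S\cup\{0\}$: one shows that each cyclotomic factor $\lambda_{p^j}$ must divide $f_S$ rather than $f_C$ (because $\lambda_{p^j}(1)=p$ while $p\nmid |C|=n/p^l$), and then a degree comparison modulo $x^{p^l}-1$ forces the residues of $S$ to be a complete system modulo $p^l$. Your write-up merely reorders these steps by first passing to $\ZZZ[y]/(y^{p^l}-1)$ and then running the cyclotomic dichotomy there; the substance is identical. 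One small point: you assert $|C|=m$ without comment, but this is immediate from evaluating the identity $\chi_S(x)\chi_C(x)\equiv 1+x+\cdots+x^{n-1}\pmod{x^n-1}$ at $x=1$.
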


\section{Proofs}
\label{sec:pf}

Let $\zeta_n$ be a primitive $n$th root of unity, say $\z_n = e^{2\pi i/n}$. The \emph{$n$th cyclotomic polynomial} is defined \cite{J14} as 
$$ 
\l_n(x) = \prod_{1 \le d < n, (d, n) = 1} (x-\z_n^d).
$$
The roots of $\l_n(x)$ are precisely the primitive $n$th roots of unity, that is, $\lambda_{n} (x) = \prod_{\z\in E_n} (x-\z)$, where $E_n$ is the set of all primitive $n$th roots of unity. We will use the following well known results (see e.g. \cite[Section 9.1]{J14}) in the proof of Theorems 1.1-1.4. 

\begin{lemma}     
\label{lem:cyc}
\begin{itemize} 
\item[\rm (a)] 
\be
\label{eq:cyc}
x^n-1=\prod_{d \mid n}\lambda_{d} \left( x \right );
\ee
\item[\rm(b)]
$\lambda_{n}(x) \in \ZZZ[x]$;
\item[\rm(c)]
$\lambda_{n}(x)$ is irreducible in $\ZZZ[x]$.
\end{itemize}
\end{lemma}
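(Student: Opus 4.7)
The plan is to prove (a), (b), (c) in order, using only the definition $\l_n(x) = \prod_{\z \in E_n}(x-\z)$ from the excerpt, where $E_n$ denotes the set of primitive $n$th roots of unity.

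For (a), I would partition the set of all $n$th roots of unity by their order. Every $\z$ with $\z^n = 1$ has a unique multiplicative order $d$ dividing $n$; conversely, every primitive $d$th root of unity with $d \mid n$ satisfies $\z^n = 1$. Since $x^n - 1$ is separable over $\CCC$ (its derivative $nx^{n-1}$ shares no root with it), we may factor $x^n - 1 = \prod_{\z^n = 1}(x-\z)$, and grouping the roots by their order yields $x^n - 1 = \prod_{d \mid n} \l_d(x)$ at once.

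For (b), I would induct on $n$. The base $n = 1$ is immediate since $\l_1(x) = x - 1$. For the inductive step, use (a) to write $x^n - 1 = \l_n(x) g(x)$, where $g(x) = \prod_{d \mid n,\, d < n}\l_d(x)$ is a monic integer polynomial by the inductive hypothesis. Performing the division algorithm for $x^n-1$ by $g(x)$ within $\ZZZ[x]$ (valid because the divisor is monic) gives an integer quotient and remainder; since the division in $\QQQ[x]$ is unique and yields $\l_n(x)$ with zero remainder, the quotient must equal $\l_n(x)$, so $\l_n(x) \in \ZZZ[x]$.

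Part (c) is the main obstacle and I would handle it by the classical reduction-mod-$p$ argument. Let $f(x) \in \ZZZ[x]$ be the monic minimal polynomial of $\z_n$ over $\QQQ$. Since $f(\z_n) = 0$ and $\l_n(\z_n) = 0$, we have $f \mid \l_n$ in $\QQQ[x]$, and by Gauss's lemma $f \mid \l_n$ in $\ZZZ[x]$; in particular $f \mid x^n - 1$, so write $x^n - 1 = f(x) h(x)$ with $h \in \ZZZ[x]$. To conclude $f = \l_n$, it suffices to show that $f(\z_n^k) = 0$ for every $k$ coprime to $n$, since this forces $\deg f \geq |E_n| = \deg \l_n$. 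Factoring $k$ into primes, the task reduces to proving: for every prime $p \nmid n$ and every root $\z$ of $f$, $\z^p$ is also a root of $f$. Suppose not; then $\z^p$ is an $n$th root of unity but not a root of $f$, hence a root of $h$, so $\z$ is a root of $h(x^p)$. By minimality $f(x) \mid h(x^p)$ in $\QQQ[x]$, and by Gauss's lemma also in $\ZZZ[x]$. Reducing modulo $p$ and applying the Frobenius identity $h(x^p) \equiv h(x)^p \pmod{p}$, we obtain $\bar f(x) \mid \bar h(x)^p$ in $\FFF_p[x]$, so $\bar f$ and $\bar h$ share an irreducible factor in $\FFF_p[x]$. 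This produces a multiple root of $\overline{x^n - 1} = \bar f(x)\bar h(x)$ in $\FFF_p[x]$, contradicting the fact that $x^n - 1$ is separable modulo $p$ (since $p \nmid n$, its derivative $nx^{n-1}$ is coprime to it). The delicate point throughout (c) is keeping every divisibility inside $\ZZZ[x]$ via Gauss's lemma so that the mod-$p$ reduction is legitimate.
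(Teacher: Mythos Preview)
Your proof is correct and is the standard textbook argument (partition of roots by order for (a), induction via monic division for (b), Dedekind's reduction-mod-$p$ argument for (c)). The paper, however, does not prove this lemma at all: it simply cites these facts as well known, referring to \cite[Section 9.1]{J14}. So there is nothing to compare against; you have supplied a valid self-contained proof where the paper merely quotes the result.
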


In particular, by \eqref{eq:cyc}, for any prime $p$ and integer $j \ge 1$,
\be
\label{eq:pj}
\lambda_{p^j}(x) = \frac{x^{p^j}-1}{x^{p^{j-1}}-1}=(x^{p^{j-1}})^{p-1}+ (x^{p^{j-1}})^{p-2}+\cdots  + x^{p^{j-1}}+1.
\ee

Define
$$
f_A(x) = \sum_{a \in A} x^a
$$
for any non-empty finite set $A$ of nonnegative integers. For a subset $S$ of $\ZZZ_n$, denote
$$
S_0 = S \cup \{0\}.
$$ 
The following lemma reduces the perfect code problem for circulant graphs to a number theoretic problem.  

\begin{lemma}
\label{lem:equiv-def}
A subset $C$ of $\ZZZ_n$ is a perfect code in $\Cay(\ZZZ_n, S)$ if and only if there exists $q(x)\in \ZZZ[x]$ such that
\be   
\label{eq:ff}      
f_C(x) f_{S_0}(x) = (x^n-1) q(x) + (x^{n-1}+\cdots +x+1).
\ee
\end{lemma}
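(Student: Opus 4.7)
The plan is to translate the combinatorial definition of a perfect code into a polynomial identity in $\ZZZ[x]/(x^n-1)$; once the product $f_C(x) f_{S_0}(x)$ is unpacked the equivalence is almost immediate.

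First I would reformulate the perfect code condition. By definition, $C$ is a perfect code in $\Cay(\ZZZ_n, S)$ iff the closed neighbourhoods $\{c\}\cup (c+S)$, $c\in C$, partition $\ZZZ_n$; equivalently, every $k\in\ZZZ_n$ has exactly one representation $k\equiv c+s\pmod n$ with $c\in C$ and $s\in S_0$. (Note that independence of $C$ is automatic: if distinct $c,c'\in C$ satisfied $c'-c\in S$, then $c'$ would have two representations, $c'=c'+0$ and $c'=c+(c'-c)$.) The target equation is a polynomial version of this uniqueness statement.

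Next I would expand the product. Treating the elements of $C$ and $S_0$ as integers in $\{0,1,\dots,n-1\}$, we have
$$
f_C(x)\,f_{S_0}(x) \;=\; \sum_{c\in C}\sum_{s\in S_0} x^{c+s} \;=\; \sum_{k\ge 0} a_k\, x^k,
$$
where $a_k=|\{(c,s)\in C\times S_0:\, c+s=k\}|$. Since $c+s\le 2n-2$, reducing modulo $x^n-1$ and collecting exponents modulo $n$ yields
$$
f_C(x)\,f_{S_0}(x) \;\equiv\; \sum_{j=0}^{n-1} b_j\, x^j \pmod{x^n-1},
$$
where $b_j=|\{(c,s)\in C\times S_0:\, c+s\equiv j\pmod n\}|$. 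By the reformulation above, $C$ is a perfect code iff $b_j=1$ for every $j\in\{0,1,\dots,n-1\}$, i.e.\ iff
$$
f_C(x)\,f_{S_0}(x) \;\equiv\; x^{n-1}+\cdots +x+1 \pmod{x^n-1}.
$$

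Finally I would convert this congruence to the exact equality \eqref{eq:ff}. Since $x^n-1$ is monic in $\ZZZ[x]$, the Euclidean division of $f_C(x)\,f_{S_0}(x)-(x^{n-1}+\cdots+x+1)\in\ZZZ[x]$ by $x^n-1$ produces a quotient $q(x)\in\ZZZ[x]$, giving the desired identity; conversely, any such identity forces the stated congruence. I do not anticipate a real obstacle here: the argument is essentially a bookkeeping translation, and the only subtle point is noting that integrality of $q(x)$ is free because the divisor is monic.
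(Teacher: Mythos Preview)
Your argument is correct and is essentially the same as the paper's: both translate the perfect-code condition into the unique-representation statement $k\equiv c+s\pmod n$, rewrite $f_C(x)f_{S_0}(x)$ modulo $x^n-1$, and note that the resulting congruence is equivalent to \eqref{eq:ff}. Your write-up is simply more explicit (e.g.\ the remark that $q(x)\in\ZZZ[x]$ because $x^n-1$ is monic, and the observation that independence is automatic), but the underlying route is identical.
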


\begin{proof}
By the definition of a perfect code, $C$ is a perfect code in $\Cay(\ZZZ_n, S)$ if and only if every integer in $\{0, 1, \ldots, n-1\}$ can be written in a unique way as $(c+s) \mod n$ with $c \in C$ and $s\in S_0$, which is equivalent to $f_C(x) f_{S_0}(x) = \sum_{c \in C, s \in S_0} x^{c+s} \equiv x^{n-1} + \cdots + x + 1 \mod (x^n-1)$. Thus $C$ is a perfect code in $\Cay(\ZZZ_n, S)$ if and only if \eqref{eq:ff} holds for some $q(x) \in \ZZZ[x]$. 
\qed
\end{proof}

The next lemma was proved in \cite[Remark 1]{OPR07}. We give a different proof using Lemma \ref{lem:equiv-def} for the completeness of the present paper. 

\begin{lemma}
\label{lem:suf}
(\cite[Remark 1]{OPR07})  
A connected circulant graph $\Cay(\ZZZ_n, S)$ of order $n\ge 4$ and degree $k = |S|$ admits a perfect code provided that $k+1$ divides $n$ and $s \not \equiv s' \mod\  (k+1)$ for distinct $s, s' \in S \cup \{0\}$.
\end{lemma}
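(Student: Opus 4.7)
The plan is to invoke Lemma \ref{lem:equiv-def} by producing an explicit perfect code. Since the $k+1$ elements of $S_0 = S \cup \{0\}$ are pairwise incongruent modulo $k+1$, they form a complete system of residues modulo $k+1$. This makes the subgroup
\[
C = \{j(k+1) : 0 \le j \le m-1\}
\]
of $\ZZZ_n$, where $m = n/(k+1)$, the natural candidate: each residue class modulo $k+1$ is represented exactly once in $S_0$, so the sums $c + s$ with $c \in C$ and $s \in S_0$ ought to sweep out $\ZZZ_n$ exactly once.

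To turn this intuition into a proof via Lemma \ref{lem:equiv-def}, I would first compute $f_C(x) = (x^n - 1)/(x^{k+1} - 1)$. Because $s \mapsto s \bmod (k+1)$ is a bijection from $S_0$ onto $\{0, 1, \ldots, k\}$, each monomial $x^s$ in $f_{S_0}(x)$ pairs with a unique $x^{r_s}$ in $1 + x + \cdots + x^k$ satisfying $(k+1) \mid (s - r_s)$, and hence the difference $f_{S_0}(x) - (1 + x + \cdots + x^k)$ is a sum of terms $x^{r_s}(x^{s-r_s} - 1)$ each divisible by $x^{k+1} - 1$ in $\ZZZ[x]$. Writing
\[
f_{S_0}(x) = (1 + x + \cdots + x^k) + (x^{k+1} - 1)\, g(x)
\]
for some $g(x) \in \ZZZ[x]$, multiplying through by $f_C(x)$, and using $(1 + x + \cdots + x^k) \cdot (x^n - 1)/(x^{k+1} - 1) = (x^n - 1)/(x - 1)$, one obtains
\[
f_C(x)\, f_{S_0}(x) = \frac{x^n - 1}{x - 1} + (x^n - 1)\, g(x),
\]
which is exactly the form \eqref{eq:ff} with $q(x) = g(x)$. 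By Lemma \ref{lem:equiv-def} this shows that $C$ is a perfect code.

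I do not foresee a real obstacle here: once the candidate subgroup $C$ is identified from the residue-class picture, the whole argument is a routine polynomial manipulation. The only point worth a moment's care is that the divisibility $x^{k+1} - 1 \mid f_{S_0}(x) - (1 + x + \cdots + x^k)$ should be checked in $\ZZZ[x]$ rather than merely in $\QQQ[x]$, which is automatic because $x^{k+1} - 1$ is monic and both sides have integer coefficients.
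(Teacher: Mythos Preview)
Your proposal is correct and follows essentially the same route as the paper: the same candidate $C=\{j(k+1):0\le j\le m-1\}$, the same observation that $f_{S_0}(x)\equiv 1+x+\cdots+x^k \pmod{x^{k+1}-1}$, and the same multiplication by $f_C(x)=(x^n-1)/(x^{k+1}-1)$ to land in the form of Lemma~\ref{lem:equiv-def}. The only difference is cosmetic---you spell out the divisibility via the term-by-term pairing $x^s-x^{r_s}=x^{r_s}(x^{s-r_s}-1)$, whereas the paper simply asserts $x^{s_i}\equiv x^i\pmod{x^{k+1}-1}$.
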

 
\begin{proof}   
Consider a connected circulant graph $\Cay(\ZZZ_n, S)$ with order $n\ge 4$ and degree $k = |S|$. Suppose that $k+1$ divides $n$, say $n=m(k+1)$ for some integer $m \ge 1$, and $s \not \equiv s' \mod\  (k+1)$ for distinct $s, s' \in S \cup \{0\}$. We may write $S_0=S \cup \{0\} = \{s_0, s_1, \ldots, s_k\}$, where $s_0, s_1, \ldots, s_k$ are pairwise distinct modulo $(k+1)$. Without loss of generality we may assume $s_0=0$ and $s_i  \equiv i \mod\  (k+1)$ for $i \in \{1, 2, \ldots, k\}$. Then $x^{s_i}\equiv \ x^i \mod\  (x^{k+1}-1)$ for $i \in \{0, 1, \ldots, k\}$ and $f_{S_0}(x)=\sum_{s \in S_0} x^s \equiv x^k + \cdots + x + 1 \mod (x^{k+1}-1)$. So there exists $q(x)\in \ZZZ[x]$ such that $f_{S_0}(x)=(x^{k+1}-1) q(x)+( x^k + \cdots + x + 1)$. 

Set $C=\{0,k+1,2(k+1),\ldots,(m-1)(k+1)\}$. Then 
$$
f_C(x)=x^{(m-1)(k+1)}+ \cdots + x^{k+1} + 1=\frac{x^{m(k+1)}-1}{x^{k+1}-1}=\frac{x^n-1}{x^{k+1}-1}
$$
and hence
$$
f_C(x) f_{S_0}(x) = (x^n-1) q(x) + (x^{n-1}+\cdots +x+1).
$$
Therefore, by Lemma \ref{lem:equiv-def}, $C$ is a perfect code in $\Cay(\ZZZ_n, S)$.  
\qed
\end{proof}

As shown in \cite{KM13, OPR07} by counterexamples, the sufficient condition for the existence of a perfect code in $\Cay(\ZZZ_n, S)$ given in Lemma \ref{lem:suf} may not be necessary. However, it is indeed necessary when $k=4$ (see \cite{OPR07}) or when $n/(k+1)$ is a prime and $S \cup \{0\}$ is aperiodic (see \cite{YPD14} for definition).

\medskip  
\begin{proof}{\bf of Theorem \ref{main}}~
By Lemma \ref{lem:suf}, it remains to prove the `only if' part. Suppose that $\Cay(\ZZZ_n, S)$ is connected of degree $|S| = p-1$ and admits a perfect code $C$, where $p$ is an odd prime. Then by Lemma \ref{lem:equiv-def}, \eqref{eq:ff} holds for some $q(x)\in \ZZZ[x]$. Setting $x=1$ in \eqref{eq:ff}, we obtain $p |C| = |C| |S_0| = f_C(1) f_{S_0}(1) = n$. Hence $p$ divides $n$. Write $n=p^l m$ with $l \ge 1$ and $m$ not divisible by $p$. Then $|C| = p^{l-1} m$ and $p^l$ does not divide $|C|$. 
 
By Lemma \ref{lem:cyc}, $\lambda_{p}(x), \lambda_{p^2}(x), \ldots , \lambda_{p^l}(x)$ are distinct irreducible polynomials each dividing $x^n-1$ and $(x^n-1)/(x-1) = x^{n-1}+\cdots +x+1$. Combining this with (\ref{eq:ff}), we obtain that $\lambda_{p^j}(x)$ divides $f_C(x)$ or $f_{S_0}(x)$ for each $j \in \{1, 2, \ldots, l\}$. 

\smallskip
\textsf{Claim 1:} There exists at least one $j \in \{1, 2, \ldots, l\}$ such that $\lambda_{p^j}(x)$ divides $f_{S_0}(x)$.

Suppose otherwise. Then $\lambda_{p}(x), \lambda_{p^2}(x), \ldots , \lambda_{p^l}(x)$ all divide $f_C(x)$. Since they are irreducible and hence pairwise coprime, it follows that $\prod_{j=1}^l \lambda_{p^j}(x)$ divides $f_C(x)$. That is,
\be  
\label{f(D)}
f_C(x) = g(x) \prod_{j=1}^l \lambda_{p^j}(x) 
\ee
for some $g(x)\in \ZZZ[x]$. Since $p$ is a prime, by \eqref{eq:pj} we have $\l_{p^j}(1) = p$ for each $j \ge 1$. Setting $x=1$ in (\ref{f(D)}), we then obtain $|C| = f_C(1) = p^l \cdot g(1)$. Since $g(1)$ is an integer, it follows that $p^{l}$ divides $|C|$, which is a contradiction. This proves Claim 1. 

Since $|S_0| = p$, we may write $S_0 = \{s_0, s_1, \ldots, s_{p-1}\}$, where $s_0=0$ and $s_1, s_2, \ldots, s_{p-1}$ are pairwise distinct. Denote by $t_i$ the unique integer in $\{0,1,\ldots, p-1\}$ such that $s_i \equiv t_i \mod p$, for $0 \le i \le p-1$. In particular, $t_0 = 0$ as $s_0 = 0$. We have
\be  
\label{fs}    
f_{S_0}(x) =\sum^{p-1}_{i=0} x^{s_i} \equiv \sum^{p-1}_{i=0} x^{t_i} \mod (x^p-1).  
\ee

\textsf{Claim 2:}  If $j \in \{2, \ldots, l\}$, then $\lambda_{p^j}(x)$ does not divide $f_{S_0}(x)$. 

Suppose to the contrary that $\lambda_{p^j}(x)$ divides $f_{S_0}(x)$ for some $j \in \{2, \ldots, l\}$, say, $f_{S_0}(x) = \lambda_{p^j}(x) h(x)$, where $h(x)\in \ZZZ[x]$. Since $j \ge 2$, $x^{p^{j-1}} \equiv 1 \mod (x^p-1)$. This together with \eqref{eq:pj} implies $\lambda_{p^j} ( x ) \equiv p \mod (x^p-1)$.
Thus, 
\be
\label{ph}
f_{S_0}(x) \equiv p\cdot \overline{h}(x) \mod (x^p-1),
\ee
where $\overline{h}(x)$ is the unique polynomial of degree less than $p$ such that $h(x) \equiv \overline{h}(x) \mod (x^p-1)$. Combining \eqref{fs} and \eqref{ph}, we have 
$$
\sum^{p-1}_{i=0} x^{t_i} \equiv p\cdot \overline{h}(x) \mod (x^p-1).  
$$
Since both sides of this equation are polynomials of degree less than $p$ whilst $x^p-1$ has degree $p$, it follows that 
\be
\label{eq:add}
\sum^{p-1}_{i=0} x^{t_i} = p\cdot \overline{h}(x).
\ee
Since $0 \le t_i \le p-1$ and $t_0=0$, this implies that all $t_i =0$, that is, every element of $S$ is a multiple of $p$. However, this implies that $\Cay(\ZZZ_n, S)$ is disconnected, which contradicts our assumption. This proves Claim 2. 
 
Combining Claims 1 and 2, we know that $\lambda_{p}(x)$ divides $f_{S_0}(x)$. Thus, by \eqref{fs} and $x^p-1=(x-1) \lambda_p (x)$,  we obtain $\sum^{p-1}_{i=0} x^{t_i} \equiv 0 \mod \lambda_{p}(x)$. Since $\sum^{p-1}_{i=0} x^{t_i}$ has degree at most $p-1$ whilst $\lambda_{p}(x)$ has degree $p-1$, it follows that $\sum^{p-1}_{i=0} x^{t_i} = a \lambda_{p}(x)$ for some integer $a$. Setting $x=1$, we obtain $p=ap$ and so $a=1$. Therefore, $\sum^{p-1}_{i=0} x^{t_i} = \lambda_{p}(x) =x^{p-1}+\cdots+x+1$. In other words, $\{t_0, t_1, \ldots, t_{p-1}\} = \{0,1,\ldots,p-1\}$, or equivalently, $s \not= s' \mod p$ for distinct $s, s' \in S_0$.  
\qed
\end{proof}
 
\begin{proof}{\bf of Theorem \ref{th:pl}}~
Again, by Lemma \ref{lem:suf}, it remains to prove the `only if' part. Suppose that $\Cay(\ZZZ_n, S)$ is connected of degree $|S| = p^l-1$ and admits a perfect code $C$, where $n , l$ are positive integers and $p$ a prime such that $p^l$ divides $n$ but $p^{l+1}$ does not. Then by Lemma \ref{lem:equiv-def}, \eqref{eq:ff} holds for some $q(x)\in \ZZZ[x]$. 
Setting $x=1$ in \eqref{eq:ff}, we obtain $p^l |C| = |C| |S_0| = f_C(1) f_{S_0}(1) = n$. Since $p^{l+1}$ does not divide $n$, $p$ does not divide $|C|$.  

Similar to the proof of Theorem \ref{main}, we see that $\lambda_{p^j}(x)$ divides $f_C(x)$ or $f_{S_0}(x)$ for each $j \in \{1, 2, \ldots, l\}$. We prove further that:

\smallskip
\textsf{Claim 3:} $\lambda_{p^j}(x)$ divides $f_{S_0}(x)$ for each $j \in \{1, 2, \ldots, l\}$. 

To prove this, let $J$ denote the set of integers $j \in \{1, 2, \ldots, l\}$ such that $\lambda_{p^j}(x)$ divides $f_C(x)$. Since $\lambda_{p}(x), \lambda_{p^2}(x), \ldots , \lambda_{p^l}(x)$ are irreducible and hence pairwise coprime, 
\be  
\label{fC}
f_C(x) = g(x) \prod_{j \in J} \lambda_{p^j}(x) 
\ee
for some $g(x) \in \ZZZ[x]$. By \eqref{eq:pj}, we have $\l_{p^j}(1) = p$ for each $j \ge 1$. Thus, setting $x=1$ in (\ref{fC}), we obtain $|C| = f_C(1) = p^{|J|} \cdot g(1)$. Since $g(1)$ is an integer, it follows that $p^{|J|}$ divides $|C|$. Since $p$ does not divide $|C|$, we must have $J = \emptyset$ and so Claim 3 is proved. 

Since $|S_0| = p^l$, we may write $S_0 = \{s_0, s_1, \ldots, s_{p^l-1}\}$, where $s_0=0$. Denote by $t_i$ the unique integer in $\{0,1,\ldots, p^l-1\}$ such that $s_i \equiv t_i \mod p^l$, for $0 \le i \le p^l-1$. In particular, $t_0 = 0$ as $s_0 = 0$. We have
\be  
\label{fs1}    
f_{S_0}(x) =\sum^{p^l-1}_{i=0} x^{s_i} \equiv \sum^{p^l-1}_{i=0} x^{t_i} \mod (x^{p^l}-1).  
\ee 
On the other hand, by Claim 3,  
\be
\label{eq:fS1}
f_{S_0}(x) = h(x) \prod_{j=1}^l \lambda_{p^j}(x)
\ee
for some $h(x)\in \ZZZ[x]$. By \eqref{eq:cyc}, $\prod_{j=1}^l \l_{p^j}(x) = (x^{p^l}-1)/(x-1) = \sum_{j=0}^{p^l-1} x^j$ divides $x^{p^l}-1$. This together with \eqref{fs1} and \eqref{eq:fS1} implies that $\sum_{j=0}^{p^l-1} x^j$ divides $\sum^{p^l-1}_{i=0} x^{t_i}$. Since the former has degree $p^{l}-1$ whilst the latter has degree at most $p^l - 1$, it follows that the latter must have degree $p^l - 1$ and moreover $\sum^{p^l-1}_{i=0} x^{t_i} =a \sum_{j=0}^{p^l-1} x^j$ for some integer $a$. Setting $x=1$, we obtain $p^l=ap^l$ and so $a=1$. That is, $\sum^{p^l-1}_{i=0} x^{t_i} = \sum_{j=0}^{p^l-1} x^j$. Therefore, $\{t_0, t_1, \ldots, t_{p^l-1}\} = \{0,1,\ldots, p^l-1\}$ and the proof is complete. 
\qed    
\end{proof}

Similar to Lemma \ref{lem:equiv-def}, one can easily verify the following result. 
\begin{lemma}
\label{lem:total-def}
A subset $C$ of $\ZZZ_n$ is a total perfect code in $\Cay(\ZZZ_n, S)$ if and only if there exists $q(x) \in \ZZZ[x]$ such that
\begin{equation}   
\label{ff1}
f_C(x) f_{S}(x) = (x^n-1)q(x) + (x^{n-1} + \cdots + x + 1).
\end{equation}
\end{lemma}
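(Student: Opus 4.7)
The plan is to mirror the proof of Lemma~\ref{lem:equiv-def} essentially verbatim, with $S_0 = S \cup \{0\}$ replaced by $S$. The only conceptual change reflects the definitional difference: for a total perfect code every vertex of $\ZZZ_n$ (including those in $C$) must have exactly one neighbour in $C$, and since the identity element $0$ is not in $S$, a vertex is never adjacent to itself in $\Cay(\ZZZ_n, S)$.

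First I would unpack the definition: $C \subseteq \ZZZ_n$ is a total perfect code in $\Cay(\ZZZ_n, S)$ if and only if for every $v \in \{0,1,\ldots,n-1\}$ there exists a unique pair $(c, s) \in C \times S$ with $v \equiv c + s \pmod{n}$. Indeed, $v$ is a neighbour of $c$ in $\Cay(\ZZZ_n, S)$ precisely when $v - c \in S$, so the number of neighbours of $v$ in $C$ equals the number of representations $v \equiv c + s \pmod{n}$ with $c \in C$, $s \in S$.

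Next I would translate this counting condition into polynomial language. Expanding the product gives
$$
f_C(x)\, f_S(x) = \sum_{c \in C,\, s \in S} x^{c+s},
$$
and reducing exponents modulo $n$ (i.e., working modulo $x^n - 1$) yields $\sum_{v=0}^{n-1} m_v\, x^v$, where $m_v$ counts the representations of $v$ above. The total perfect code property is exactly $m_v = 1$ for every $v$, which is equivalent to
$$
f_C(x)\, f_S(x) \equiv x^{n-1} + \cdots + x + 1 \pmod{x^n - 1}.
$$
This congruence, in turn, is precisely the existence of $q(x) \in \ZZZ[x]$ satisfying (\ref{ff1}).

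There is no real obstacle: the argument is a direct transcription of the proof of Lemma~\ref{lem:equiv-def}, and the only thing one needs to be careful about is that $S$ (not $S_0$) is the correct multiplier, which is automatic once one writes down the neighbour condition in $\Cay(\ZZZ_n, S)$ explicitly.
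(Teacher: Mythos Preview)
Your proposal is correct and follows exactly the approach the paper indicates: the paper does not write out a separate proof but simply notes that Lemma~\ref{lem:total-def} is verified ``similar to Lemma~\ref{lem:equiv-def}'', i.e., by replacing $S_0$ with $S$. Your transcription of that argument is accurate and complete.
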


Similar to \cite[Remark 1]{OPR07} (see Lemma \ref{lem:suf}), we have the following observation.
\begin{lemma}
\label{lem:suf-total}
A connected circulant graph $\Cay(\ZZZ_n, S)$ of order $n\ge 4$ and degree $k = |S|$ admits a total perfect code provided that $k$ divides $n$ and $s \not \equiv s' \mod k$ for distinct $s, s' \in S$.
\end{lemma}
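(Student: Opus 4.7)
The plan is to mirror the proof of Lemma \ref{lem:suf} essentially verbatim, replacing $S_0$ by $S$ and invoking Lemma \ref{lem:total-def} in place of Lemma \ref{lem:equiv-def}.

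First I would exploit the hypothesis $|S|=k$ together with the pairwise distinctness of the elements of $S$ modulo $k$ to conclude that the residues of $S$ modulo $k$ are exactly $\{0,1,\ldots,k-1\}$. After relabeling, write $S=\{s_0,s_1,\ldots,s_{k-1}\}$ with $s_i \equiv i \mod k$. Consequently $f_S(x) \equiv x^{k-1}+\cdots+x+1 \mod (x^k-1)$, so
$$f_S(x)=(x^k-1)\,q(x)+(x^{k-1}+\cdots+x+1)$$
for some $q(x)\in\ZZZ[x]$.

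Second, setting $m=n/k$, the natural candidate is the (index-$k$) cyclic subgroup $C=\{0,k,2k,\ldots,(m-1)k\}$ of $\ZZZ_n$, whose characteristic polynomial telescopes to $f_C(x)=1+x^k+\cdots+x^{(m-1)k}=(x^n-1)/(x^k-1)$. Multiplying the displayed identity for $f_S(x)$ by $f_C(x)$ and using $x^k-1=(x-1)(x^{k-1}+\cdots+x+1)$ produces
$$f_C(x)\,f_S(x)=(x^n-1)\,q(x)+(x^{n-1}+\cdots+x+1),$$
so by Lemma \ref{lem:total-def} the set $C$ is a total perfect code in $\Cay(\ZZZ_n,S)$.

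There is no genuine obstacle here; the argument is a direct analogue of Lemma \ref{lem:suf} with $k+1$ uniformly replaced by $k$. The only notable difference is that the identity element of $\ZZZ_n$ is not singled out (we do not insist that $0\in S$), but this is harmless: it is the cardinality $|S|=k$ matching the modulus $k$ that drives the residue-covering step, and the telescoping identity for $f_C(x)$ still gives exactly the right factor to cancel the $(x^k-1)$ in $f_S(x)$.
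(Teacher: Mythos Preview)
Your argument is correct and produces the same total perfect code $C=\{ki:0\le i<n/k\}$ as the paper. However, the paper does \emph{not} mirror the polynomial proof of Lemma~\ref{lem:suf}; instead it gives a one-line combinatorial verification: since the $k$ elements of $S$ are pairwise distinct modulo $k$, they exhaust all residue classes, so every $v\in\ZZZ_n$ satisfies $v\equiv s\pmod k$ for a unique $s\in S$, whence $v-s\in C$ and $v$ has exactly one neighbour in $C$. Your polynomial route via Lemma~\ref{lem:total-def} is a faithful analogue of Lemma~\ref{lem:suf}'s proof and has the virtue of uniformity with the rest of the paper's machinery; the paper's direct argument is shorter and bypasses the $f_C$, $f_S$ identities entirely, but both are equally valid and yield the same code.
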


In fact, since all elements of $S$ are pairwise distinct modulo $k$, for any $v \in \ZZZ_n$ there exists a unique $s\in S$ such that $v \equiv s \mod k$, implying that $\{k i: 0 \le i < n/k\}$ is a total perfect code in $\Cay(\ZZZ_n, S)$. 

Theorem \ref{th:main-total} can be proved using Lemmas \ref{lem:total-def} and \ref{lem:suf-total} and following the proof of Theorem \ref{main} but with $S \cup \{0\}$ replaced by $S = \{s_0, s_1, \ldots, s_{p-1}\}$. (Since $s_0 \ne 0$ in the current case, $t_0$ may not be $0$, but one can see that not all elements of $S$ are congruent to each other modulo $p$ as $p$ is odd and each $n - s_i \in S$. So from \eqref{eq:add} we can still derive that all $t_i = 0$.)

Theorem \ref{th:pl-total} can be proved using Lemmas \ref{lem:total-def} and \ref{lem:suf-total} and following the proof of Theorem \ref{th:pl} but with $S \cup \{0\}$ replaced by $S = \{s_0, s_1, \ldots, s_{p^l - 1}\}$.

\section{Remarks}
\label{sec:rem}

We remark that our method in the previous section can be adapted to give a totally different proof of the following known result. 

\begin{theorem} 
\label{th:4}  
(\cite[Theorem 2]{OPR07})   
A cubic connected circulant graph of order $n \ge 4$ admits a perfect code if and only if $n \equiv 4 \mod 8$.
\end{theorem}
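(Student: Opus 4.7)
The plan is to reuse the cyclotomic machinery from the proofs of Theorems~\ref{main} and~\ref{th:pl}, specialized to the prime $p = 2$. Let $\Ga = \Cay(\ZZZ_n, S)$ be a cubic connected circulant graph of order $n \ge 4$. Since $|S| = 3$ is odd and $S = -S$, some element of $S$ must be self-inverse, so $n$ is even and $S = \{a, n-a, n/2\}$ for some $a$ with $0 < a < n/2$. Connectedness is equivalent to $\gcd(a, n/2) = 1$.

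For sufficiency, suppose $n \equiv 4 \mod 8$, so $n/2 = 2m$ with $m$ odd. Then $\gcd(a, 2m) = 1$ forces $a$ to be odd, and the residues of $0, a, n-a, n/2$ modulo $4$ are $0, \pm a, 2$, which together constitute the complete residue system $\{0,1,2,3\}$. Lemma~\ref{lem:suf} with $k+1 = 4$ then supplies a perfect code.

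For necessity, assume that $\Ga$ admits a perfect code $C$. By Lemma~\ref{lem:equiv-def}, \eqref{eq:ff} holds for some $q(x)\in\ZZZ[x]$; setting $x=1$ gives $4|C| = n$, so write $n = 2^l m$ with $l \ge 2$ and $m$ odd. Since $l \ge 2$, $n/2$ is even, and connectedness forces $a$ to be odd. Exactly as in the earlier proofs, each irreducible factor $\lambda_{2^j}(x)$ for $j = 1, \ldots, l$ divides $f_C(x)$ or $f_{S_0}(x)$. The key step is to evaluate $f_{S_0}(\zeta) = 1 + \zeta^a + \zeta^{-a} + \zeta^{n/2}$ at a primitive $2^j$-th root of unity $\zeta$. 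When $j \le l-1$, we have $\zeta^{n/2} = 1$, so $f_{S_0}(\zeta) = 0$ forces $\zeta^a = -1$; since $a$ is odd, this can hold only for $j = 1$. When $j = l$, we have $\zeta^{n/2} = (-1)^m = -1$, so $f_{S_0}(\zeta) = 0$ forces $\zeta^{2a} = -1$; since $v_2(a) = 0$, this requires $l = 2$.

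Consequently, if $l \ge 3$ then $\lambda_2(x)$ is the only one of the $\lambda_{2^j}(x)$ dividing $f_{S_0}(x)$, so all of $\lambda_4(x), \lambda_8(x), \ldots, \lambda_{2^l}(x)$ divide $f_C(x)$. Evaluating at $x=1$ and using $\lambda_{2^j}(1) = 2$ from \eqref{eq:pj} yields $2^{l-1} \mid f_C(1) = |C| = 2^{l-2} m$, contradicting the oddness of $m$. Therefore $l = 2$, i.e., $n \equiv 4 \mod 8$. The main obstacle I anticipate is the case analysis of $\zeta^{n/2}$: the values $j < l$ and $j = l$ produce genuinely different equations, and it is precisely the switch to $\zeta^{n/2} = -1$ at $j = l$, together with the forced parity $v_2(a) = 0$ from connectedness, that upgrades an algebraic divisibility count into the sharp arithmetic constraint $l = 2$.
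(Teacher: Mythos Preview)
Your argument is correct and follows essentially the same cyclotomic route as the paper's proof: both reduce to analysing which of $\lambda_{2^j}(x)$, $1\le j\le l$, divide $f_{S_0}(x)$ versus $f_C(x)$, and both derive the constraint $l=2$ from this. The only cosmetic difference is that the paper reduces $f_{S_0}(x)$ modulo $\lambda_{2^j}(x)=x^{2^{j-1}}+1$ as a polynomial identity (obtaining $1+(-1)^{2^{l-j}m}+(-1)^q(x^r-x^{2^{j-1}-r})=0$, whence $l=j$ and $r=2^{j-2}$ is odd), whereas you evaluate directly at a primitive $2^j$-th root and use $\zeta^a+\zeta^{-a}=-2\Rightarrow\zeta^a=-1$; the two computations are equivalent and yield the same constraints.
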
   
  
\begin{proof} 
It can be verified that, for a cubic connected circulant graph $\Cay(\ZZZ_n, S)$, where $n \ge 4$ and $S = \{n/2, s, n-s\}$ for some $1 \le s \le (n/2)-1$, $n$ must be even and moreover $n \equiv 4 \mod 8$ if and only if $4$ divides $n$ and the elements of $S \cup \{0\}$ are pairwise distinct modulo $4$. Thus, by Lemma \ref{lem:suf}, if $n \equiv 4 \mod 8$, then $\Cay(\ZZZ_n, S)$ admits a perfect code $C$.

Suppose that $\Cay(\ZZZ_n, S)$ admits a perfect code $C$. Similar to the proof of Theorem \ref{main}, by Lemma \ref{lem:equiv-def}, (\ref{eq:ff}) holds and so $4|C| = |C| |S_0| = f_C(1) f_{S_0}(1) = n$. This together with the connectedness of $\Cay(\ZZZ_n, S)$ implies that $n$ is a multiple of 4 and $s$ must be odd.  Write $n=2^l m$ with $l \ge 2$ and $m$ odd. Then $|C| = 2^{l-2} m$. Since $2^{l-1}$ does not divide $|C|$, similar to Claim 1 in the proof of Theorem \ref{main} one can show that exactly two of $\lambda_{2}(x), \lambda_{2^2}(x), \ldots, \lambda_{2^l}(x)$ divide $f_{S_0}(x)$. So there is at least one $j \in \{2, \ldots, l\}$ such that $\lambda_{2^j}(x)$ divides $f_{S_0}(x)$. Note that $\lambda_{2^j} (x) = x^{2^{j-1}}+1$ by \eqref{eq:pj}. 

Write $s = 2^{j-1} q + r$, where $q$ and $r$ are integers and $0 \le r \le 2^{j-1}-1$. Since $s$ is odd and $j \ge 2$, $r$ is odd and so $1 \le r \le 2^{j-1}-1$.  We have 
\bea   
f_{S_0}(x) & = & x^0+x^{n/2} +x^s+x^{n-s} \nonumber  \\
& = & 1+(x^{2^{j-1}})^{2^{l-j}m}+(x^{2^{j-1}})^{q} \cdot x^{r}+(x^{2^{j-1}})^{2^{l-j+1}m-q-1}  \cdot x^{2^{j-1}-r} \nonumber \\   
& \equiv & 1+(-1)^{2^{l-j}m}+(-1)^{q} \cdot x^{r}+(-1)^{2^{l-j+1}m-q-1}  \cdot x^{2^{j-1}-r}  \mod (x^{2^{j-1}}+1)  \nonumber \\
& \equiv & 1+(-1)^{2^{l-j}m}+(-1)^{q}  \cdot (x^{r} - x^{2^{j-1}-r})  \mod (x^{2^{j-1}}+1). \nonumber
\eea
Thus, since $x^{2^{j-1}}+1$ divides $f_{S_0}(x)$, it also divides $1+(-1)^{2^{l-j}m}+(-1)^{q}  \cdot (x^{r} - x^{2^{j-1}-r})$. However, this polynomial has degree at most $2^{j-1}-1$ as $1 \le r \le 2^{j-1}-1$. Therefore, $1+(-1)^{2^{l-j}m}+(-1)^{q}  \cdot (x^{r} - x^{2^{j-1}-r})=0$, yielding $1+(-1)^{2^{l-j} m}=0$ and $r = 2^{j-1}-r$. Hence $l=j$ and $r = 2^{j-2}$. Since $r$ is odd, we then have $l=j=2$. So $n=4m$ with odd $m$. Thus $n \equiv 4 \mod 8$ and the proof is complete.         
\qed    
\end{proof}

It is well known that Cayley graphs are vertex-transitive. In general, perfect codes in vertex-transitive graphs are also of considerable interest. For example, the problem of characterizing vertex-transitive graphs admitting a perfect code was posed in \cite{KP12}. In the same paper it was proved that a connected cubic vertex-transitive graph of order $2^m$ ($m\ge 3$) has a perfect code if and only if it is not isomorphic to the M\"{o}bius ladder $M_{2^{m-1}}$. (Since $M_{2^{m-1}}$ is isomorphic to the cubic circulant graph $\Cay(\ZZZ_{2^m}, \{2^{m-1},1,-1\})$ and $2^m$ is divisible by $8$ when $m \ge 3$, the fact that $M_{2^{m-1}}$ has no perfect codes can be thought as a special case of Theorem \ref{th:4}.) Since $2^m \equiv 0 \mod 8$ when $m \ge 3$, this implies that, in contrast to Theorem \ref{th:4}, a connected cubic Cayley graph of order $n$ admitting a perfect code may not satisfy $n \equiv 4 \mod 8$, as shown also in \cite[Table 1]{KP12}. It would be interesting to give a characterization of cubic Cayley graphs (or cubic vertex-transitive graphs) admitting at least one perfect code.

\medskip

\textbf{Acknowledgements}~~R. Feng was supported by NSFC with No. 61370187 and by NSFC--Genertec Joint Fund For Basic Research with No. U1636104 and H. Huang by the China Scholarship Council (No. 201506010015). S. Zhou acknowledges the support of the Australian Research Council (FT110100629). 
The authors are grateful to the two anonymous referees for their helpful comments and suggestions, and to Professor M. Buratti for informing us the recent work \cite{DSLW16} on the same topic using different approaches.

{\small

}


\begin{thebibliography}{99} 

\bibitem{AAK01}
R. Ahlswede, H. K. Aydinian and L. H. Khachatrian, On perfect codes and related concepts, {\em Des.  Codes Cryptogr.} {\bf 22} (2001), 221--237.
 
\bibitem{Biggs73}
N. Biggs, Perfect codes in graphs, {\em J. Combin. Theory Ser. B} {\bf 15} (1973), 288--296.

\bibitem{DS03}
I. J. Dejter and O. Serra, Efficient dominating sets in Cayley graphs, 
\textit{Discrete Appl. Math.} {\bf 129} (2003), 319--328.

\bibitem{YPD14}
Y-P. Deng, Efficient dominating sets in circulant graphs with domination number prime, {\em Inform. Process. Lett.} {\bf 114} (2014), 700--702. 

\bibitem{DSLW16}
Y-P. Deng, Y-Q. Sun, Q. Liu and H.-C. Wang, Efficient dominating sets in circulant graphs, {\em Discrete Math.} (2017), \url{http://dx.doi.org/10.1016/j.disc.2017.02}.

\bibitem{E87}
G. Etienne, Perfect codes and regular partitions in graphs and groups, {\em European J. Combin.} {\bf 8} (1987), 139--144.

\bibitem{Etzion11}
T. Etzion, Product constructions for perfect Lee codes, \emph{IEEE Trans. Inform. Theory} {\bf 57} (2011), 7473--7481.

\bibitem{GHT}
A-A. Ghidewon, R. H. Hammack and D. T. Taylor, Total perfect codes in tensor products of graphs, {\em Ars Combin.} {\bf 88} (2008), 129--134. 

\bibitem{Heden}
O.~Heden, A survey of perfect codes, {\em Adv. Math. Commun.} {\bf 2} (2008), 223--247.

\bibitem{HXZ}
H. Huang, B. Xia and S. Zhou, Perfect codes in Cayley graphs, preprint, \url{https://arxiv.org/abs/1609.03755}.


\bibitem{J14}
F. Jarvis, Algebraic Number Theory, Springer, 2014.

\bibitem{KP12}
M. Knor and P. Poto\v{c}nik, Efficient domination in cubic vertex-transitive graphs, {\em European J. Combin.} {\bf 33} (2012), no. 8, 1755--1764.

\bibitem{Kra}
J.~Kratochv\'{i}l, Perfect codes over graphs, 
{\em J. Combin. Theory Ser. B} {\bf 40} (1986), 224--228.


\bibitem{L01}
J. Lee, Independent perfect domination sets in Cayley graphs, {\em J. Graph Theory} {\bf 37} (2001), 213--219.

\bibitem{KM13}
K. Reji Kumar and G. MacGillivray, Efficient domination in circulant graphs, {\em Discrete Math.} {\bf 313} (2013), 767--771. 
 
\bibitem{MBG07}
C. Mart\'{i}nez, R. Beivide and E. Gabidulin, Perfect codes for metrics induced by circulant graphs, {\em IEEE Trans. Inform. Theory} {\bf 53} (2007), 3042--3052.

\bibitem{OPR07}
N. Obradovi\'{c}, J. Peters and G. Ru\v{z}i\'{c}, Efficient domination in circulant graphs with two chord lengths, {\em Inform. Process. Lett.} {\bf 102} (2007), 253--258. 

\bibitem{T04}
S. Terada, Perfect codes in $\SL(2, 2^f)$, {\em European J. Combin.} {\bf 25} (2004), 1077--1085.

\bibitem{vanLint}
J.~H.~van Lint, A survey of perfect codes, {\em Rocky Mountain J. Math.} {\bf 5} (1975), 199--224.
 
\bibitem{Z15}
S. Zhou, Cyclotomic graphs and perfect codes, preprint, \url{http://arxiv.org/abs/1502.03272}.

\bibitem{Z16}
S. Zhou, Total perfect codes in Cayley graphs, {\em Des. Codes Cryptogr.} {\bf 81} (2016), 489--504.

\end{thebibliography}
\end{document}